\def\hat{\widehat}
\def\*{\star}
\def\[{\left[}
\def\]{\right]}
\def\({\left(}      
\def\){\right)}
\def\frac#1#2{\dfrac{#1}{#2}}
\def\inv#1{\dfrac{1}{#1}}
\def\2pi{\hbox{$2\pi i$}}
\def\dsl{\raise.15ex\hbox{/}\kern-.57em\partial}
\def\Dsl{\,\raise.15ex\hbox{/}\mkern-.13.5mu D}
\def\2pi{\hbox{$2\pi i$}}
\def\dsl{\raise.15ex\hbox{/}\kern-.57em\partial}
\def\Dsl{\,\raise.15ex\hbox{/}\mkern-.13.5mu D}
\font\numbers=cmss12
\font\upright=cmu10 scaled\magstep1
\def\stroke{\vrule height8pt width0.4pt depth-0.1pt}
\def\topfleck{\vrule height8pt width0.5pt depth-5.9pt}
\def\botfleck{\vrule height2pt width0.5pt depth0.1pt}
\def\Zmath{\vcenter{\hbox{\numbers\rlap{\rlap{Z}\kern
    0.8pt\topfleck}\kern 2.2pt
    \rlap Z\kern 6pt\botfleck\kern 1pt}}}
\def\Qmath{
    \vcenter{\hbox{\upright\rlap{\rlap{Q}\kern3.8pt\stroke}\phantom{Q}}}}
\def\Nmath{\vcenter{\hbox{\upright\rlap{I}\kern 1.7pt N}}}
\def\Cmath{\vcenter{\hbox{\upright\rlap{\rlap{C}\kern
                   3.8pt\stroke}\phantom{C}}}}
\def\Rmath{\vcenter{\hbox{\upright\rlap{I}\kern 1.7pt R}}}
\def\Z{\ifmmode\Zmath\else$\Zmath$\fi}
\def\Q{\ifmmode\Qmath\else$\Qmath$\fi}
\def\N{\ifmmode\Nmath\else$\Nmath$\fi}
\def\C{\ifmmode\Cmath\else$\Cmath$\fi}
\def\R{\ifmmode\Rmath\else$\Rmath$\fi}
\def\barray{\begin{eqnarray}}
\def\earray{\end{eqnarray}}
\def\beq{\begin{equation}}
\def\eeq{\end{equation}}
\def\Li{{\rm Li}}
\def\AA{\leavevmode\setbox0=\hbox{h}
\dimen0=\ht0 \advance\dimen0 by-1ex\rlap{\raise.67\dimen0\hbox{\char'27}}A}
\def\fluc{\mathfrak{f}}
\def\fluch{\mathfrak{h}}
\def\fluchhat{\hat{\fluch}}
\def\Li{{\rm Li}}
\def\fluchat{\hat{\fluc}}
\def\cCramer{c}
\def\kappa{c}
\def\iddots{\mathinner{\mkern1mu\raise\p@
\vbox{\kern7\p@\hbox{.}}\mkern2mu
\raise4\p@\hbox{.}\mkern2mu\raise7\p@\hbox{.}\mkern1mu}}
\def\Li{{\rm Li}}
\theoremstyle{plain}
\newtheorem{theorem}{Theorem}
\newtheorem{lemma}{Lemma}
\newtheorem{proposition}{Proposition}
\theoremstyle{remark}
\begin{document}

\title{
Asymptotic upper bound  on prime gaps
}
\author{
 Andr\'e  LeClair\footnote{andre.leclair@gmail.com}
}
\affiliation{Cornell University, Physics Department, Ithaca, NY 14850} 

\begin{abstract}
The Cram\'er-Granville conjecture is an upper bound on prime gaps,
$g_n = p_{n+1} - p_n < \cCramer  \,  \log^2 p_n$ for some constant $\cCramer \geq 1$.   
Using a formula of Selberg,  we first prove the weaker summed version:
$\sum_{n=1}^N g_n <  \sum_{n=1}^N \log^2 p_n$.   
In the remainder of the paper we investigate which properties of the
fluctuations $\fluc (x) = \pi (x) - \Li(x)$ would imply the Cram\'er-Granville conjecture
is true and present two such properties,  one of which assumes the Riemann Hypothesis;  
however we are unable to prove these properties are indeed satisfied.   
We argue that the conjecture is related to the enormity of the Skewes number. 
\end{abstract}

\maketitle

Let $p_n$ denote the $n$-th prime number with $p_1=2$,  and define
the gaps 
\beq
\label{gap}
g_n = p_{n+1} - p_n 
\eeq
The lowest gap is obviously equal to 2,  however an interesting question
is how often this minimal gap occurs,   and the twin primes conjecture
says  an infinite number of times.    There has recently been
progress on this problem by Yitang Zhang \cite{Zhang}.    In the other direction,
upper bounds on gaps are also of interest.   In practice the latter are 
more important since,  given knowledge of a prime,   they  can aid in the location of the next prime.      

The Prime Number Theorem leads to $p_n \approx  n \log n$,  which
implies the average gap $g_n$ is $\log n$.    However it is known that
the maximal gaps grow faster than this \cite{Westzynthius}: 
$$\limsup_{n\to\infty}\frac{p_{n+1}-p_n}{\log p_n}=\infty$$
The Cram\'er-Granville conjecture \cite{Cramer,Granville} is the statement
\beq
\label{CG}
g_n  <  \cCramer  \, \log^2 p_n   , ~~~~~\forall n 
\eeq
for some constant $\cCramer\geq 1$.   Cram\'er's model supports the 
conjecture with $\cCramer=1$,   whereas Granville proposed that 
$\cCramer >1$ and suggested  $\cCramer \ge 2e^{-\gamma}\approx1.1229\ldots$.   
There is extensive numerical evidence for the conjecture  
\cite{Cadwell}.  
Furthermore,  thus far  this evidence supports the value $\cCramer=1$  at least for $n>4$:    
 the greatest known value of the ratio $g_n/\log^2 p_n$  is 0.9206386... for the prime 1693182318746371,  which is somewhere around the
  $n=5 \times 10^{13}$-th prime. 
 
 As usual let $\pi (x)$  denote the number of primes less than or equal 
 to $x$.   Riemann derived an explicit formula for $\pi (x)$ in terms of
 an infinite sum over zeros of $\zeta (s)$ inside the critical strip
 $0 \geq \Re (s) \leq 1$.   The Prime Number Theorem (PNT) is the 
 result that the leading term is 
 \beq
 \label{PNT} 
 \pi (x) =  \sum_{p\leq x}  1  \approx \int_2^x  \frac{dt}{\log t}   = \Li (x)
 \eeq
 It was proven independently by Hadamard and  de la  Vall\'ee Poussin 
 using Riemann's formula for $\pi (x)$ and showing that there are no
 zeros with $\Re (s) =1$.

Cram\'er's original conjecture is
essentially based on the PNT.   The Cram\'er model is 
 a probabilistic model of the primes, in which one assumes that the probability of a natural number of size x being prime is 1/log x.  Cram\'er proved that in this model, the  conjecture holds true with probability one.
Since there is a great deal of  numerical evidence the conjecture is correct,  
this suggests that a proof of it may only require something slightly 
stronger than the PNT,  and this will what we will investigate here. 
     A result of this kind is due to Selberg \cite{Selberg}.
The latter led to an independent,  so-called  elementary proof of the PNT,  in that it does
not rely on Riemann's formula for $\pi (x)$.   
The starting point for this article  is a formula of  Selberg,  namely
Theorem 1 below.

The Cram\'er-Granville conjecture may be difficult, or even impossible,
to prove for a single  given $n$.    On the other hand,  if it is known to be
true by direct computation for all $n < N_0$ for some high enough 
$N_0$,  one can attempt to prove the conjecture with a bootstrap 
principle that extrapolates to infinity using some asymptotic formulas
valid in the limit of large $N$.   The asymptotic formulas must be such
that the relative fluctuations decrease fast enough not to spoil the
validity of the conjecture.  
This is in analogy to statistical physics of a system of
$N$ particles,   where the relative fluctuations are typically of order
$1/\sqrt{N}$ so that results just get better and better as one increases $N$,
i.e. in the so-called  thermodynamic limit.   

In the sequel,  relative fluctuations  will  be quantified as follows.
Since $\Li (x)$ is the leading term,  
\beq
\label{fluctu}
 \pi (x) =  \Li (x)   +  \fluc (x) 
 \eeq
 where the fluctuating term $\fluc (x)$ grows more slowly 
 than  the smooth part $\Li (x) \approx  x/\log x$,  i.e. $\lim_{x \to \infty}  \fluc(x)/\Li(x) = 0$. 
We will consider different bounds on $\fluc (x)$ depending on whether one assumes the
Riemann Hypothesis or not.   

\bigskip
\bigskip

We begin with:

\bigskip
\begin{theorem}\label{SelbergTh}
  (Selberg) 
\beq
\label{Selberg}
2 x \log x + O(x)  =  \sum_{p\leq x}  \log^2 p   ~~+   \sum_{p,q {~ \rm with~}  pq \leq x} \log p \log q 
\eeq
where $p,q$ are primes. 
\end{theorem}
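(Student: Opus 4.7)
The plan is to derive Selberg's identity from a convolution identity for $\log^2 n$ expressed via the von Mangoldt function. Let $\Lambda(n) = \log p$ when $n = p^k$ for some prime $p$ and integer $k \ge 1$, and zero otherwise. Starting from the Dirichlet-series identity $\zeta''(s)/\zeta(s) = -L'(s) + L(s)^2$, where $L(s) = -\zeta'(s)/\zeta(s) = \sum_n \Lambda(n)/n^s$, and combining with $1/\zeta(s) = \sum_n \mu(n)/n^s$ and $\zeta''(s) = \sum_n \log^2 n / n^s$, one obtains by equating Dirichlet coefficients the arithmetic identity
\beq
\Lambda(n) \log n + (\Lambda \star \Lambda)(n) = \sum_{d | n} \mu(d) \log^2(n/d).
\eeq

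Summing this over $n \le x$ and swapping the order of summation on the right gives
\beq
\sum_{n \le x} \Bigl[ \Lambda(n) \log n + (\Lambda \star \Lambda)(n) \Bigr] = \sum_{d \le x} \mu(d) \sum_{m \le x/d} \log^2 m.
\eeq
Using the elementary Euler--Maclaurin estimate $\sum_{m \le y} \log^2 m = y \log^2 y - 2 y \log y + 2 y + O(\log^2 y)$, the right-hand side reduces to a combination of M\"obius-weighted sums. The leading piece is $x \sum_{d \le x} (\mu(d)/d) \log^2(x/d)$, and standard elementary bounds on partial sums of $\mu(d)/d$ and $\mu(d)\log(x/d)/d$ (provable from Mertens' theorem without invoking the PNT) yield $2 x \log x + O(x)$ for the combined main term; the remainder term $O(\log^2(x/d))$ summed over $d \le x$ contributes only $O(x)$.

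Finally, on the left-hand side I would convert the $\Lambda$-weighted sums to sums over primes. Because $\Lambda(n)$ differs from the prime-indicator weighting $\log p$ only at prime powers $p^k$ with $k \ge 2$, and the contribution from these terms is $O(\sqrt{x} \log^2 x)$, one obtains
\beq
\sum_{n \le x} \Lambda(n) \log n = \sum_{p \le x} \log^2 p + O(\sqrt{x} \log^2 x),
\eeq
and similarly $\sum_{n \le x} (\Lambda \star \Lambda)(n) = \sum_{pq \le x} \log p \log q + O(\sqrt{x} \log^2 x)$. These prime-power corrections are absorbed into the $O(x)$ error, yielding \eqref{Selberg}.

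The principal obstacle is the bookkeeping in the M\"obius-sum step: extracting the precise coefficient $2$ in front of $x \log x$ requires combining several M\"obius-weighted sums whose asymptotics individually are delicate, and one must carefully track how the $\log^2$, $\log$, and constant pieces of the asymptotic of $\sum_{m \le y} \log^2 m$ recombine after weighting by $\mu(d)/d$. Once this is settled, the reduction from prime powers to primes and the bounding of the remainder contributions is routine.
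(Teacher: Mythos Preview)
Your sketch is the standard derivation of Selberg's symmetry formula and is essentially correct: the convolution identity $\Lambda(n)\log n + (\Lambda\!*\!\Lambda)(n) = \sum_{d\mid n}\mu(d)\log^2(n/d)$, summed over $n\le x$ and evaluated via the asymptotic for $\sum_{m\le y}\log^2 m$ together with the elementary estimate $\sum_{d\le x}(\mu(d)/d)\log^2(x/d)=2\log x+O(1)$, yields the stated form; the passage from $\Lambda$-sums to prime sums is routine as you describe.

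There is nothing to compare here, however: the paper does not prove Theorem~\ref{SelbergTh}. It is stated as a known result of Selberg (with the reference to \cite{Selberg}) and used as the starting point for the rest of the argument. So your proposal supplies a proof where the paper offers none. The only caveat is that the crucial M\"obius estimate you flag as ``delicate'' really is the heart of the matter; in a complete write-up you would want to spell out how $\sum_{d\le x}\mu(d)/d=O(1)$ and the companion logarithmic sum combine to produce exactly the coefficient $2$, since everything else is bookkeeping.
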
  

\bigskip
Let us first make some simple observations based on the above theorem.   
Let $S_1 (x)$  denote the first sum on the RHS of the above equation 
and $S_2 (x)$ the second sum.  
 $S_1$ includes terms up to 
$\log^2 x$,   whereas  $S_2$ contains terms with $p=q$  up to  $\log^2  \sqrt{x}$,  which are much smaller.  
There are additional terms in $S_2$ compared to $S_1$ for $p >  q$, 
where the largest prime is approximately $x/2$ corresponding to the 
term $\log (x/2) \log 2 $,  and they are also considerably smaller.       This strongly suggests that $S_1 (x) >  S_2 (x)$.  
One can easily check numerically that for all $x< p_{10^4} = 104729$,  
$S_1 (x)$  is significantly  larger than $S_2 (x)$ and the difference increases with $x$. 
    For instance, for  $x=p_{10^4}$,  
$S_1 (x)  - S_2 (x) =  686787.25..$.     For larger $x$,  the difference 
$S_1 - S_2$  only continues to grow.       These considerations lead us to 
formulate the following lemma:

\bigskip
\noindent
\begin{lemma}\label{Lemma1}
\beq
\label{lemma}
   \sum_{p,q {~ \rm with~}  pq \leq x} \log p \log q
~< ~ \sum_{p\leq x}  \log^2 p 
\eeq
\end{lemma}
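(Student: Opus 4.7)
\bigskip
\noindent\textbf{Proof plan.} The plan is to combine Selberg's identity (Theorem~\ref{SelbergTh}) with an independent asymptotic for $S_1(x) := \sum_{p\leq x}\log^2 p$, and then to pin down the sign of $S_1(x)-S_2(x)$, where $S_2(x) := \sum_{p,q:\,pq\leq x}\log p\log q$.

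\emph{Step 1 (asymptotic for $S_1$).} Apply Abel summation to Chebyshev's function $\vartheta(x):=\sum_{p\leq x}\log p$:
\beq
S_1(x) \;=\; \vartheta(x)\log x \;-\; \int_2^x \frac{\vartheta(t)}{t}\,dt.
\eeq
The Prime Number Theorem $\vartheta(x)=x+o(x)$ then immediately yields $S_1(x)=x\log x - x + o(x)$.

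\emph{Step 2 (Selberg, sharpened).} Theorem~\ref{SelbergTh} gives $S_1(x)+S_2(x)=2x\log x+O(x)$. A more careful tracking of its proof --- via the Selberg identity $\Lambda(n)\log n+(\Lambda\ast\Lambda)(n)=\sum_{d\mid n}\mu(d)\log^2(n/d)$, the standard estimate $\sum_{m\leq y}\log^2 m=y\log^2 y-2y\log y+2y+O(\log^2 y)$, and the Mertens-type evaluations of $\sum_{d\leq x}\mu(d)\log^j(x/d)/d$ for $j=0,1,2$ (which come from the Laurent expansion of $1/\zeta(s)$ at $s=1$) --- sharpens the error to
\beq
S_1(x)+S_2(x) \;=\; 2x\log x \;-\; 2(1+\gamma)\,x \;+\; o(x),
\eeq
where $\gamma$ is the Euler--Mascheroni constant. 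Combined with Step~1 this gives $S_2(x)=x\log x-(1+2\gamma)x+o(x)$, and hence
\beq
S_1(x) - S_2(x) \;=\; 2\gamma\,x \;+\; o(x).
\eeq

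\emph{Step 3 (strict inequality for every $x$).} Since $\gamma>0$, the difference $S_1-S_2$ is strictly positive for all $x$ above some threshold $X_0$. For $x\leq X_0$ the lemma reduces to a finite computation; the paper already confirms it through $x=p_{10^4}$, and extending the direct check up to whatever $X_0$ is needed is mechanical.

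\emph{Main obstacle.} The crux is making the $o(x)$ in Step~2 effective, so that an explicit $X_0$ can be named. The unconditional PNT error contributes at best $O\bigl(x\exp(-c(\log x)^{3/5})\bigr)$, which renders $X_0$ astronomical but finite. A cleaner proof would either invoke the Riemann Hypothesis --- sharpening the remainder to $O(\sqrt{x}\log^2 x)$ --- or bypass the asymptotic route by exploiting the combinatorial rearrangement
\beq
S_1(x) - S_2(x) \;=\; \sum_{\sqrt{x}<p\leq x}\log^2 p \;-\; 2\sum_{p\leq\sqrt{x}}\log p\,\bigl(\vartheta(x/p)-\vartheta(p)\bigr),
\eeq
bounding the two sides separately via explicit PNT inequalities of Rosser--Schoenfeld type, which exhibit the positivity of the ``large prime'' contribution ($p>\sqrt{x}$) over the ``mixed pair'' contribution directly.
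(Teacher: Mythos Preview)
Your proposal follows the same high-level strategy as the paper --- establish $S_1(x)-S_2(x)\sim cx$ for some $c>0$ and cover small $x$ by direct computation --- but the technical route is genuinely different and considerably more rigorous. The paper simply replaces each prime sum by an integral with density $1/\log t$, obtaining
\[
S_1(x)\approx x\log x - x,\qquad S_2(x)\approx x\log x-(2+\log 2)x,
\]
and hence $S_1-S_2\approx(1+\log 2)\,x$; no error terms are controlled, and the claim that ``fluctuations are small compared to $S_1-S_2$'' is asserted rather than proved. You instead combine the PNT asymptotic $S_1(x)=x\log x-x+o(x)$ with a sharpened Selberg evaluation $S_1+S_2=2x\log x-2(1+\gamma)x+o(x)$, yielding $S_1-S_2=2\gamma\,x+o(x)$. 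Your constant $2\gamma\approx1.154$ is the correct one; the paper's $1+\log 2\approx1.693$ is an artifact of the heuristic density (the integral model misses the Mertens-type constant that enters $\sum_{p\le y}(\log p)/p$). What your approach buys is an honest, in-principle rigorous argument together with an explicit identification of the real obstacle --- making the $o(x)$ effective so that a concrete $X_0$ can be named --- which the paper's proof simply elides. Your alternative combinatorial rearrangement in Step~3 is also correct and is probably the cleanest path to a fully explicit inequality via Rosser--Schoenfeld bounds, should one wish to close the gap completely.
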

\begin{proof}
 For  $x< p_{N_0}$ where $N_0 = 10^4$ for instance or even much smaller, 
 the above
inequality is easily verified by direct computation.   For higher $x$,  
the relative fluctuations of $S_1$ and $S_2$ are very small compared to
the difference $S_1 - S_2$.     Thus, to go to higher $x$,  one can use
the following asymptotic formulas:
\barray
\label{smoothS}
\sum_{p\leq x}  \log^2 p &\approx&  \int_2^x  \frac{dt}{\log t} ~\[ \log^2 t \] =  
x\log x - x - 2 \log 2 + 2
\\ 
\nonumber
 \sum_{p,q {~ \rm with~}  pq \leq x}   \log p \log q 
 &\approx&
 \int_2^x  \frac{dt}{\log t}  \int_2^{x/t}  \frac{du}{\log u}  ~ \[  \log t  \log u \] = 
 x\log x - (2 + \log 2) x + 4
 \earray
 Therefore $S_1 (x) -  S_2 (x) \approx (1+\log 2) x$ for large $x$,
 and this linear growth in $x$ is much larger  than the fluctuations
 for large enough $x$.  
\end{proof}

\bigskip
\noindent
\begin{theorem}\label{theorem2}
  For large enough $N>N_0$ for some finite $N_0$,  one has
\beq
\label{gaptheorem}
\sum_{n=1}^N   g_n   ~<~  \sum_{n=1}^N  \log^2 p_n  \eeq
\end{theorem}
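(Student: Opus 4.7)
The plan is to combine a trivial telescoping identity with Theorem~\ref{SelbergTh} and Lemma~\ref{Lemma1}. First I telescope the left-hand side of (\ref{gaptheorem}):
\begin{equation}
\sum_{n=1}^N g_n \;=\; \sum_{n=1}^N (p_{n+1}-p_n) \;=\; p_{N+1} - p_1 \;=\; p_{N+1} - 2 .
\end{equation}
Writing $S_1(x)=\sum_{p\le x}\log^2 p$ as in the discussion after Theorem~\ref{SelbergTh}, the right-hand side of (\ref{gaptheorem}) is exactly $S_1(p_N)$. The assertion therefore reduces to the single pointwise inequality $p_{N+1}-2 < S_1(p_N)$ for all $N>N_0$.

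Second, I lower-bound $S_1(p_N)$ using what is already available. Evaluating Selberg's identity (\ref{Selberg}) at $x=p_N$ gives $S_1(p_N)+S_2(p_N)=2\,p_N\log p_N+O(p_N)$, and Lemma~\ref{Lemma1} asserts $S_2(p_N)<S_1(p_N)$. Adding $S_1(p_N)$ to both sides of the Selberg identity and using the strict inequality from the lemma yields
\begin{equation}
2\,S_1(p_N) \;>\; S_1(p_N)+S_2(p_N) \;=\; 2\,p_N \log p_N + O(p_N),
\end{equation}
so $S_1(p_N) > p_N \log p_N - C\,p_N$ for some absolute constant $C>0$ and all sufficiently large $N$.

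Third, I compare this lower bound to $p_{N+1}$. Bertrand's postulate provides the crude bound $p_{N+1}<2\,p_N$, and this is already enough: once $\log p_N > C+2$, one has $p_N\log p_N - C\,p_N > 2\,p_N > p_{N+1}-2$, and chaining with the previous lower bound on $S_1(p_N)$ produces $p_{N+1}-2 < S_1(p_N)$, which is (\ref{gaptheorem}). No invocation of the finer PNT asymptotic $p_N\sim N\log N$ is required, although it would also work.

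The main (and essentially only) obstacle is bookkeeping: the threshold $N_0$ must be chosen so that (i) Lemma~\ref{Lemma1} is valid at $x=p_N$, (ii) the $O(p_N)$ error in Selberg's identity is absorbed into an honest $C\,p_N$ with an explicit $C$, and (iii) $\log p_{N_0}>C+2$. All three are finite, effective conditions, so the theorem holds with an explicit $N_0$ and no further analytic input. The essential content sits entirely in Lemma~\ref{Lemma1}; the telescoping and Bertrand's postulate are mere packaging, which explains why this bound is so much weaker than the pointwise Cram\'er--Granville conjecture (\ref{CG}).
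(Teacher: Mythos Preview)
Your proof is correct and follows essentially the same route as the paper's: combine Selberg's identity with Lemma~\ref{Lemma1} to obtain $S_1(x) > x\log x + O(x)$, then compare with the telescoped sum $\sum_{n\le N} g_n = p_{N+1}-2$. The only cosmetic difference is that the paper evaluates Selberg at $x=p_{N+1}^-$ rather than at $x=p_N$, which yields the lower bound $p_{N+1}\log p_{N+1}+O(p_{N+1})$ directly and so dispenses with your appeal to Bertrand's postulate.
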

\begin{proof}
 From  Lemma 1 and Theorem 1,  one has 
\beq
\label{gap1}
x \log x +O(x)  ~<~   \sum_{p \leq x}  \log^2 p   
\eeq
Let $x = p_{N+1} - \epsilon$  with $\epsilon$ small and positive,
and consider the limit $\epsilon \to 0$.      
Then \eqref{gap1} can be expressed as 
\beq
\label{SelP}
p_{N+1} \log p_{N+1} +O(p_{N+1}) ~<~  \sum_{n=1}^N \log^2 p_n 
\eeq  
It should be kept in mind that the $O(p_{N+1})$  term can be negative here.   Nevertheless,  
 for large enough $N$, 
 $p_{N+1} <  p_{N+1} \log p_{N+1} +O(p_{N+1})$. 
Next,  noting that   
$p_{N+1} = \sum_{n=1}^N  g_n    +  2$
 proves the theorem.    
 \end{proof}

\bigskip

Theorem 2 is consistent with the Cram\'er-Granville conjecture,
and suggests $\cCramer =1$,   although it is also consistent with $\cCramer >1$ 
since the fluctuations in the sum   could  just average out  to give  $\cCramer =1$.        
Furthermore  it  is clearly  not enough to establish it since it does not imply 
that each individual term in the sum satisfies the inequality.
For the remainder of this article,  we propose conditions  on the fluctuations 
which would imply the Cram\'er-Granville conjecture,   however we are unable to 
prove these conditions are true.

\bigskip

\noindent

\begin{proposition}\label{Deltamon}
Define 
\beq
\label{Deltax}
\Delta (x)  =  \sum_{p<x}  \(  \log^2( p) - \frac{g(p)}{\kappa} \)
\eeq
where $g(p_n) =  g_n$.   
  Then  if $\Delta (x)$  with $\kappa >1$ is a monotonically increasing function 
of $x$  in a region $x_1 < x < x_2$ 
  then the Cram\'er-Granville conjecture is true   for all primes  $p$ 
  in the region  
$x_1 < p < x_2$.   
\end{proposition}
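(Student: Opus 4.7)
The plan is to exploit the fact that $\Delta(x)$ is a step function. First I would observe that $\Delta$ is constant on every open interval between consecutive primes, and that at each prime $p$ it experiences a discrete jump of size
\beq
\Delta(p^{+}) - \Delta(p^{-}) = \log^2 p - \frac{g(p)}{\kappa}.
\eeq
Consequently the total variation of $\Delta$ across any subinterval is simply the sum of these jumps over the primes in that subinterval.

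Next, if $\Delta(x)$ is (strictly) monotonically increasing on $(x_1, x_2)$, then for any two consecutive primes $p_n, p_{n+1}$ contained in this interval the single intermediate jump must be positive,
\beq
\log^2 p_{n+1} - \frac{g_{n+1}}{\kappa} > 0,
\eeq
since $\Delta$ is constant on $(p_n, p_{n+1})$ and the only possible change of $\Delta$ between $p_n^{+}$ and $p_{n+1}^{+}$ is the jump at $p_{n+1}$. Rearranging gives $g_{n+1} < \kappa \log^2 p_{n+1}$, which is exactly the Cram\'er-Granville bound \eqref{CG} at that prime. Applying the same reasoning to each prime in $(x_1, x_2)$ completes the argument.

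The proposition is really a tautological reformulation of the conjecture, and so I do not expect any substantive mathematical difficulty in the proof itself. The only care required is bookkeeping: ``monotonically increasing'' must be read in the strict sense in order to deliver the strict inequality rather than a weak one, and one must be mindful of the boundary at the smallest prime in $(x_1, x_2)$, for which the preceding jump lies outside the region where monotonicity is assumed. The interest of the proposition lies not in its proof but in the fact that it recasts Cram\'er-Granville as a monotonicity statement for the partial sum $\Delta(x)$, which may then be attacked through the smoothed asymptotics of $\Delta(x)$ together with control on its fluctuations, in the same spirit as Lemma \ref{Lemma1} and Theorem \ref{theorem2}.
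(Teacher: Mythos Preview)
Your argument is correct and is essentially the same as the paper's: the paper packages the step function as the discrete sequence $D(N)=\Delta(p_{N+1})$ and observes that monotonicity forces each increment $D(N)-D(N-1)=\log^2 p_N - g_N/\kappa$ to be positive, which is exactly your jump computation at each prime. Your remarks about strict versus weak monotonicity and about the boundary prime are apt and in fact slightly more careful than the paper's own proof.
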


\begin{proof}
Define the  discrete function of $N$ 
\beq
\label{DN}
D(N) \equiv  \sum_{n=1}^N  \( \log^2 p_n - \frac{g_n}{\kappa} \)  
\eeq
Now $D(N)$ may be viewed as the above  function $\Delta$ of $p_{N+1}$,  i.e. 
$  D(N) = \Delta (p_{N+1})$.     
Since $p_{N+1}$ is a monotonically increasing function of $N$,   then 
if $\Delta (p)$ is a monotonically increasing function of $p$ for large enough $p$,  then $D(N)$ 
is a monotonically increasing function of $N$ for large enough $N>N_0$.  
This  in turn  implies  that for $N>N_0$,   at each step in the sum,  $N-1  \to N$,  one 
has  $\kappa \log^2 p_N > g_N$.
\end{proof}

\begin{proposition}\label{hofx}
\beq
\label{Deltap}
\Delta (x)= x \log x -  \frac{(c+1)}{c}  x + \fluchhat (x)  +O(1)
\eeq
where 
\beq
\label{hx}
- \frac{B x}{\log x}  <  \fluchhat (x) <   \frac{B x}{\log x} 
\eeq
for some constant $B\approx 5$.   
\end{proposition}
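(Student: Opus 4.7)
The plan is to split $\Delta(x)$ into its two constituent sums and analyze each via standard partial summation, then collect smooth and fluctuating contributions separately.

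First, for the gap piece, I would use the telescoping identity $\sum_{n=1}^N g_n = p_{N+1} - 2$. Taking $N = \pi(x^-)$, this gives the exact formula $\sum_{p<x} g(p) = x + \delta(x) - 2$, where $\delta(x) \geq 0$ denotes the distance from $x$ to the next prime. This contributes the entire $-x/c$ piece of the leading order, together with a fluctuation $-\delta(x)/c$ of order a single prime gap.

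For the log-squared sum $S_1(x) = \sum_{p\leq x}\log^2 p$, I would apply Stieltjes integration by parts,
\beq
S_1(x) = \pi(x)\log^2 x - 2\int_2^x \frac{\pi(t)\log t}{t}\, dt,
\eeq
and substitute $\pi(t) = \Li(t) + \fluc(t)$ from \eqref{fluctu}. The smooth part evaluates to $\int_2^x \log^2 t\, d\Li(t) = \int_2^x \log t\, dt = x\log x - x + O(1)$. Combined with the telescoped gap sum this isolates exactly the claimed leading terms $x\log x - \frac{c+1}{c}x$ and leaves
\beq
\fluchhat(x) = \fluc(x)\log^2 x - 2\int_2^x \frac{\fluc(t)\log t}{t}\, dt - \frac{\delta(x)}{c}.
\eeq

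Bounding each piece by $Bx/\log x$ would come next. The term $\delta(x)/c$ is unconditionally $O(x^{0.525})$ by Baker--Harman--Pintz, so it is utterly negligible. For the remaining two, one would input an effective PNT estimate of Dusart type, $|\fluc(t)| \leq C\, t/\log^k t$ with $k \geq 3$ above some explicit threshold; the $\log^2 x$ multiplier then yields a clean $O(x/\log x)$ bound on the boundary term, and after splitting the integration range at the threshold the integral contribution is controlled in the same way, with the small-$t$ piece absorbed into $O(1)$.

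The main obstacle is not the asymptotic statement but pinning down the explicit constant $B \approx 5$ rather than a vague $o(x/\log x)$. Purely asymptotic estimates are insufficient; the argument requires (i) direct numerical verification of the two-sided inequality up to some effective threshold and (ii) careful tracking of the constants in an unconditional effective PNT bound beyond that threshold. The delicate point is that rigorously known constants in the unconditional error term for $\fluc(x)$ are typically far worse than what numerics suggest, so the $B$ one can prove may be substantially larger than the experimentally observed value; closing that gap, without invoking the Riemann Hypothesis, is where the proof is likely to meet real resistance.
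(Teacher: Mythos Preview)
Your proposal is correct and follows essentially the same route as the paper: Stieltjes integration by parts on $\sum_{p<x}\log^2 p$, substitution of a Dusart-type effective bound $|\pi(t)-\Li(t)|\ll t/\log^3 t$, and numerical verification below the Dusart threshold to fix $B\approx 5$. If anything you are more careful than the paper, which silently absorbs the telescoped gap sum into the $-x/c$ term without isolating the $\delta(x)$ correction you identify.
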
 

\bigskip
\begin{proof}
    One has the exact formula
\beq
\label{logsqu}
\sum_{p<x}  \log^2 p  =  \int_2^x  d\pi (t)  \log^2 t 
\eeq
It is known that  \cite{Dusart,Dusart2} 
\beq
\label{piBound}
 \frac{x}{\log x}   +  \frac{x}{\log^2 x}   + \frac{1.8 x}{\log^3 x }< \pi (x)  <
 \frac{x}{\log x}   +  \frac{x}{\log^2 x}   +   \frac{2.51 x}{\log^3 x } 
\eeq
 for large enough $x$;   the first inequality requires $x\geq 32299$ and the 
 second $x \geq 355991$.        
Thus we can simply write this as 
\beq
\label{PiO}
\pi (x) = \frac{x}{\log x}   +  \frac{x}{\log^2 x}   +  \frac{2x}{\log^3 x} +  O\(    \frac{x}{\log^3 x }  \)
\eeq
where the first three terms come from the expansion of $\Li (x)$.   
The bound 
 \eqref{PiO}  is only valid for large enough $x$.   However by changing the constants 
 one can obtain a bound valid for all $x$:
\beq
\label{pibound}
\pi (x) = \frac{x}{\log x} + \frac{x}{\log^2 x}  + \frac{2x}{\log^3 x } + \fluchat (x) 
\eeq 
where 
\beq
\label{fluchatbound}
- \frac{Bx}{\log^3 x}   <  \fluchat (x) <   \frac{Bx}{\log^3 x} 
\eeq
for some constant $B \approx 5$.  We determined this value of $B$  by verifying 
 \eqref{fluchatbound} is valid for $x$ below the value where \eqref{PiO} becomes valid.   Integrating  \eqref{logsqu} by parts,   this implies 
\eqref{hx}.
\end{proof}

Define  the function $b(x)$ such that  
\beq
\label{Cx}
\fluchhat (x) =  \frac{b(x) x}{\log x}  
\eeq
where $-B < b(x) < B$.  
One has 
\beq
\label{DelDer}
\frac{d\Delta}{dx}  = \log x -\inv{\kappa} + \frac{b(x)}{\log x} \( 1- \inv{\log x} \) 
+ \frac{b' (x) x}{\log x} 
\eeq
where $b'(x)$ is formally $db(x)/dx$.    Due to the jump discontinuities 
of $b(x)$ where the derivative is not defined,  one needs to formally define $b'(x)$.  
On a prime $p_n$ let us define it as follows:
\beq
\label{Cprime}
b' (p_n) = \frac{b(p_{n+1}) - b(p_n)}{p_{n+1} - p_n }  
\eeq
We then define $b'(x)$ for other $x$ as a linear  interpolation between $b'(p_n)$, $n=1,2,3....$.   

Recall that based on Proposition  \ref{Deltamon},  we wish to 
show that $d\Delta (x)/dx > 0$ for $x> x_0$ for some finite $x_0$.  
Let us first provide some  evidence that the $b'$ term in 
\eqref{DelDer}  can be neglected,  i.e. does not spoil the monotonicity of $\Delta (x)$ at high $x$.  
  To this end, let us first  assume $b' = 0$ so that 
$b$ is  constant.  
  Then as far as monotonicity goes,  
   in  the worse case  $b(x) = -B$,
and $\Delta (x)$ would be monotonically increasing for 
\beq
\label{xlower} 
x \gtrsim e^{1/2\kappa+ \sqrt{1/4\kappa^2 +B}}
\eeq
Note that for a given $B$,   the above equation does not 
single out any particularly special value of $\kappa$.   However,  if, for instance,  one wishes 
the Cram\'er-Granville conjecture to be true beyond the first $5$ primes,
then  $\kappa \approx 1$,   since  
  for $B=5$,  \eqref{xlower}  gives $x> 16.3 > p_6$.     
  Interestingly,   the original  Cram\'er conjecture with $\cCramer=1$ is
only known to fail for the first $4$ primes,   and beyond this has been verified numerically to
very high $N>10^{10}$ or so.    If we chose a smaller value of $\kappa$,   
then $\Delta (x)$ simply  becomes monotonically increasing beyond a higher value of $x$.   
The  fact that this analysis led to the very  likely  correct prediction 
that the Cram\'er-Granville conjecture is valid for primes $p>p_5$,    suggests that the 
$b' (x)$ must indeed be very small.      As we discuss below,  this appears
to be related to the enormous value of the Skewes number.     
This analysis also suggests the value of $\cCramer$ is fixed by the 
gaps in the {\it low} primes.

One can state something more precise as follows.    Only if $b'(x) <  0$ can it spoil the
monotonicity.    
If the following condition holds
\beq
\label{cprimebound}
b'(p) > - \frac{\log^2 p}{p} \( 1- \inv{\kappa \log p}  \)
\eeq
where $p$ is  prime with $p>x_0$ 
 for some $x_0$,  then the Cram\'er-Granville conjecture is true 
for $p> x_0$.     
 Numerically,  we checked that the above condition is valid  for 
   $5< p < 10^{12}$  with $\kappa =1$,  however we cannot prove that it is valid beyond this.  
 
Stronger bounds on the fluctuations do not significantly improve the analysis.     
Since it is believed that the strongest bounds come from assuming the 
Riemann Hypothesis,  let us do so.    Schoenfeld showed \cite{Schoenfeld} 
that 
\beq
\label{Scho}
| \fluc (x) | = | \pi (x) - \Li (x) |  <   K \sqrt{x} \log x
\eeq
for $x>2657$ where $K=1/8\pi$.      By increasing $K$,  one can make the above 
bound valid for all $x$;    we found $K\approx 1/3$.   
Repeating the above steps leads to 
\beq
\label{DeltaRH}
\Delta (x) = x \log x - \frac{(c+1)}{c}  x + \fluch (x) +O(1)
\eeq
where 
\beq
\label{fluch}
\fluch (x) =  k(x) \sqrt{x}  \log^3 x
\eeq
From \eqref{Scho} one has 
\beq
\label{Kbound} 
- K  <  k(x) <  K 
\eeq
In \eqref{fluch} we have dropped a few terms with lower powers of $\log x$, 
such as $\sqrt{x} \log^2 x$  since they will not   significantly change our conclusions.

Define the formal derivative $k'(x)$ of $k(x)$ as in \eqref{Cprime} with $b\to k$.  
In Figure \ref{kprime} we plot some values.      
We then  have 
\beq
\label{DelDerRH}
\frac{d \Delta (x)}{dx} =  \log x -\inv{\kappa} + \frac{k(x) \log x}{\sqrt{x}} +  k'(x) \sqrt{x} \log^3 x
\eeq
As before, 
if one first assumes the $k' $ term is negligibly small,   then the worse case is 
$k = -K$.    One finds that for $\kappa =1$ and $K=1/3$,    $d\Delta (x) / dx > 0 $ for $x>p_2$. 
Thus,  once again we find that neglecting the $k'$ term apparently  leads to the right conclusion,  i.e. 
that the Cram\'er-Granville conjecture is true with $\kappa =1$ for all primes $p> p_4$.   
In particular,  if the following condition holds
\beq
\label{kprimebound}
k'(p) > - \inv{\sqrt{p} \log^2 p} \( 1- \inv{\kappa \log p} \)  
\eeq
for $p>x_0$ then the Cram\'er-Granville conjecture holds for  primes $p>x_0$.   
We checked numerically that the above condition  with $\kappa =1$   is satisfied for all primes
$p>3$ up to $10^{12}$,  however again we cannot prove this result.     See Figure \ref{kprime}. 

\begin{figure}[t]
\centering\includegraphics[width=.5\textwidth]{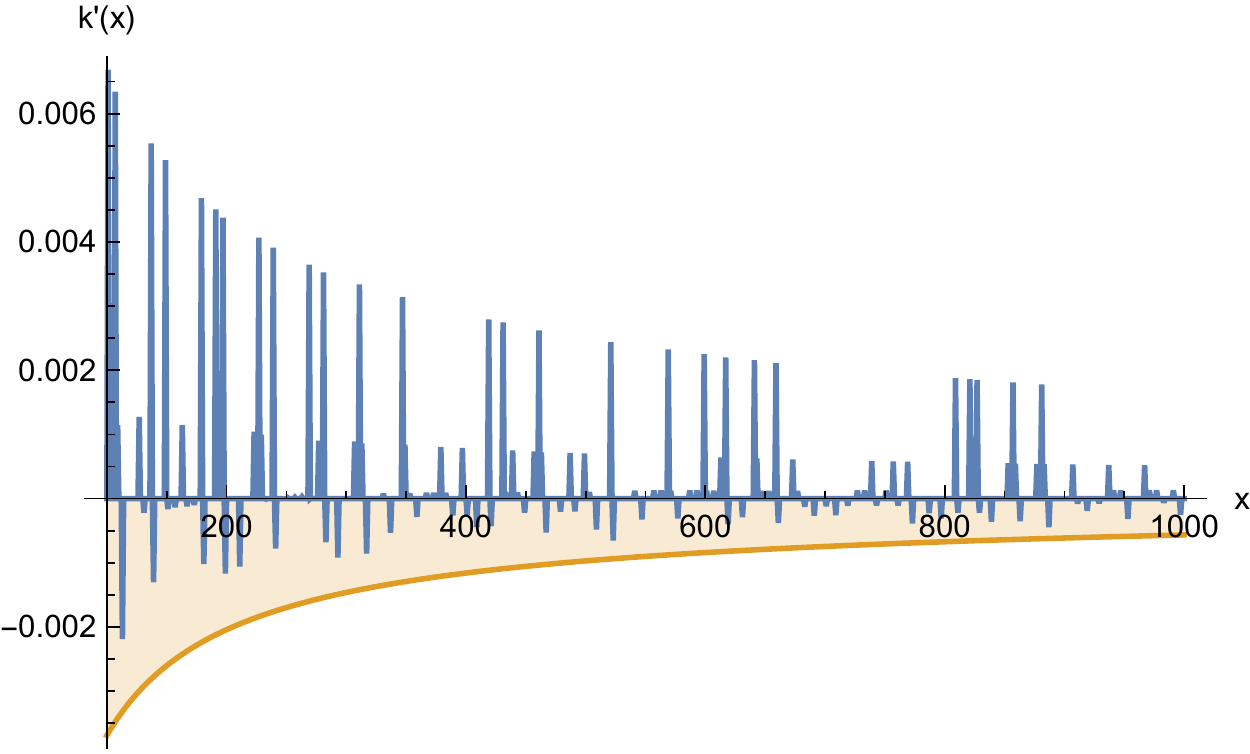}
\caption{$k'(x)$ evaluated at primes.   The smooth curve is the RHS of \eqref{kprimebound}
with $c=1$. }
\label{kprime}
\end{figure}

\def\Sk{{\rm Sk}}

\bigskip

%\n {\it The Cram\'er-Granville conjecture is Skewed!}
Additional evidence for $k'(x)$ being very small  comes from the largeness of the Skewes number.  
Up to 
values of $x$ that are within reach numerically,  $\Li (x) > \pi (x)$,  and for a long time
it was believed that this persists to infinity.   
 Let $\Sk_1$ denote the Skewes number,  which is the
first $x$ where the  crossover    $\Li (x) < \pi (x)$ occurs.
   Littlewood proved $\Sk_1$ exists, i.e. isn't infinite,  and Skewes first  estimated it 
as 
$\Sk_1 <10^{10^{10^{34}}}$.
 This has since been reduced to $\Sk_1 < 10^{316}$ \cite{Bays}.   
 Numerically it is also known that $\Sk_1 > 10^{14}$.   
Now   $k (x)$ starts out negative with $k(x)>-1/8\pi$ for low $x$,  
then very slowly increases on average until it finally changes sign at $\Sk_1$. 
Thus the average of $k'(x)$  in the region $2< x < \Sk_1$ is approximately 
$(8\pi \Sk_1)^{-1}$,   which is exceedingly small.

   It is interesting to try and determine the Skewes number from the smallnes of 
   $k'(x)$.   
Numerically we find that $k(x) = O(\log \log \log x)$,   so that $k'(x)$ is indeed 
small at large $x$.         In particular,   For $x<10^{10}$ 
a reasonably good fit is that on average 
\beq
\label{kfit}
k(x)  \approx - A( \alpha - \log \log \log x )  
\eeq
with $\alpha \approx 1.3$.   
If  this  approximation persists,  then $\Sk_1 = e^{e^{e^\alpha}}$.    This is obviously very sensitive
to $\alpha$.    For $\alpha = 1.3$,  $\Sk_1 =  10^{17}$,  however a small increase to 
$\alpha = 1.5$ would already give $\Sk_1 = 10^{38}$.     Any values $\alpha >2$ are 
already ruled out since for $\alpha =2$,  $\Sk_1 > 10^{702}$.

\section*{Acknowledgments}

We wish to thank Guilherme Fran\c ca and Hendrik Vogt  for discussions.

\end{document}